\newcounter{licznik}
\newcounter{temp}
\newtheorem{definition}{Definition} [licznik]
\newtheorem{theorem}[definition]{Theorem}
\newtheorem{lemma}[definition]{Lemma}
\newtheorem{remark}[definition]{Remark}
\newtheorem{corollary}[definition]{Corollary}
\newenvironment{proof}{\par\noindent {\bf Proof.}}
		{\begin{flushright} \vspace*{-1mm} \mbox{$\Box$} \end{flushright}}
\begin{document}

\title{From directed path to linear order \\
- the best choice problem for powers of directed path
\footnotetext{* The research was partially supported by MNiSW grant N N206 372939.}
}

\author{Andrzej Grzesik}
\affil{ Theoretical Computer Science Department\\
 Faculty of Mathematics and Computer Science\\
 Jagiellonian University\\
 ul. Prof. Stanis{\l}awa {\L}ojasiewicza 6, 30-348 Kraków, Poland\\
 {\rm andrzej.grzesik@uj.edu.pl}
}

\author{Micha{\l} Morayne, Ma{\l}gorzata Sulkowska$^*$}
\affil{{Institute of Mathematics and Computer Science\\
 Wroc{\l}aw University of Technology\\
 ul. Wybrze\.{z}e Wyspia\'{n}skiego 27, 50-370 Wroc{\l}aw, Poland\\
 {\rm \{michal.morayne, malgorzata.sulkowska\}@pwr.wroc.pl}}
}

\date{}

\maketitle
\rm
\small

\paragraph{Abstract.}
We examine the evolution of the best choice algorithm and the probability of its success from a directed path to the linear order of the same cardinality through $k$th powers of a directed path, $1 \leq k<n$. The vertices of a $k$th power of a directed path of a known length $n$ are exposed one by one to a selector in some random order. At any time the selector can see the graph induced by the vertices that have already come. The selector's aim is to choose online the maximal vertex (i.e. the vertex with no outgoing edges). It is shown that the probability of success $p_n$ for the optimal algorithm for the $k$th power of a directed path satisfies $p_n = \Theta(n^{-1/(k+1)})$. We also consider the case when the selector knows the distance in the underlying path between each two vertices that are joined by an edge in the induced graph. An optimal algorithm for this choice problem is presented. The exact probability of success when using this algorithm is given.

\vspace{0.2 cm}

{\bf Key words:} directed graph, secretary problem, best choice, graph power

\vspace{0.2 cm}

{\bf AMS subject classification:} 60G40

\normalsize

\stepcounter{licznik}
\paragraph{\thelicznik. ~Introduction.} The secretary problem is the most classical optimal stopping problem. One looks there for a strategy of choosing the best candidate from $n$ linearly ordered applicants for a job as a secretary. The selector knows the total number of candidates and examines them one by one in some random, unknown a priori, order. At time $t$ (when the $t$th applicant is being interviewed) all the relative ranks of the candidates examined so far are revealed, however nothing is known about the future candidates. Selector's aim is to hire the presently examined candidate maximizing the probability that this one is the best from the whole pool. This problem has a full solution which is of a threshold type. It tells the selector to wait until a certain moment (asymptotically $n/e$) and at this moment or later choose the first applicant which is the best up to now. The probability of success is asymptotically $1/e$. For the first time a full solution of this problem was written down by Lindley in \cite{DVL}.

Many different variants of this beautiful problem were later considered. For the historical overview of the classical secretary problem consult Ferguson's survey \cite{TF}. Partially ordered versions (where a linear order of candidates is replaced by a partial one and by success we understand choosing one of its maximal elements) were first considered by Stadje (\cite{WS}). Threshold strategies for poset version of the best choice problem were considered later by a group of Russian mathematicians in a series of papers. An account of this research is given by Gnedin in \cite{AVG}. Optimal strategies for regular or simple posets were found by Garrod, Kubicki and Morayne (\cite{GKM}) and Morayne (\cite{MM}), Ka{\'z}mierczak (\cite{WK1}, \cite{WK2}) and Tkocz(\cite{JT}) and Ka{\'z}mierczak and Tkocz (\cite{WKJT}). Preater (\cite{JP}) considered a restricted information case when the selector knows in advance only the total number of candidates and has no other information about the underlying poset. Surprisingly, he showed that even then it is possible to achieve success with probability bounded away from zero. Improvements of Preater's bound were obtained by Georgiou, Kuchta, Morayne and Niemiec (\cite{GKMN}), Kozik (\cite{JK}), Freij and W\"{a}stlund (\cite{FW}). Problems with still partial but reacher information were considered by Garrod and Morris (\cite{GM}), Kumar, Lattanzi, Vassilvitskii and Vattani (\cite{KLVV}).

Orders are very rich directed graphs where each pair of comparable elements is a directed edge (the direction is from a smaller to a bigger one). Thus in a linear order each pair is connected (Fig.\ref{fig_kthpower}c represents the linear order of length $4$). The structure of a directed path is much poorer, only consecutive elements are joined by a directed edge (Fig.\ref{fig_kthpower}a). In \cite{GKMM} Kubicki and Morayne found an optimal algorithm and its probability of success for choosing a last vertex from a directed path (here a selector can see at a given moment a graph induced by the vertices that have already arrived). The optimal stopping time for choosing one of two last vertices from a directed path was found by Przykucki and Sulkowska in \cite{MPMS}. The analogue of Preater's problem for graphs was investigated by Goddard, Kubicka and Kubicki (\cite{GKK}) and Sulkowska (\cite{MS}). Some further generalization to random graphs was considered by Przykucki in \cite{MP}.

One can think that on one end we have a directed path and on the other end a linear order which refers to the $(n-1)$st, i.e., full, power of a directed path (Fig.\ref{fig_kthpower}).
\begin{figure}[!ht]
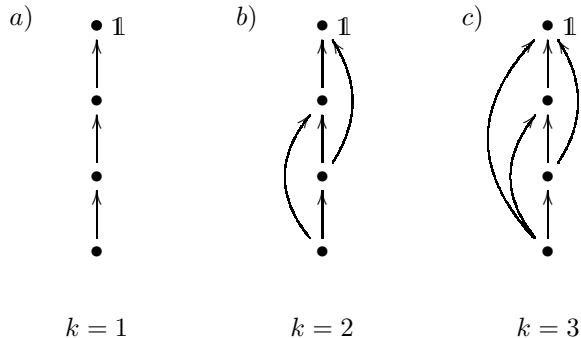

	\centerline{
		\xygraph{
		!{<0cm,0cm>;<1cm,0cm>:<0cm,1cm>::}
		!{(-1,4.1)}*+{a)}
		!{(0,1)}*+{\bullet}="11"
		!{(0,2)}*+{\bullet}="12"
		!{(0,3)}*+{\bullet}="13"
		!{(0,4)}*+{\bullet}="14"
		!{(0,4)}*+{\bullet}="14"
		!{(0.3,4)}*+{\mathds{1}}
		!{(2,4.1)}*+{b)}
		!{(3,1)}*+{\bullet}="21"
		!{(3,2)}*+{\bullet}="22"
		!{(3,3)}*+{\bullet}="23"
		!{(3,4)}*+{\bullet}="24"
		!{(3.3,4)}*+{\mathds{1}}
		!{(5,4.1)}*+{c)}
		!{(6,1)}*+{\bullet}="31"
		!{(6,2)}*+{\bullet}="32"
		!{(6,3)}*+{\bullet}="33"
		!{(6,4)}*+{\bullet}="34"
		!{(6.3,4)}*+{\mathds{1}}	
		!{(0,0)}*+{k=1}
		!{(3,0)}*+{k=2}
		!{(6,0)}*+{k=3}
		"11":"12"
		"12":"13"
		"13":"14"
		"21":"22"
		"22":"23"
		"23":"24"
		"21":@/^0.5cm/"23"
		"22":@/_0.4cm/"24"
		"31":"32"
		"32":"33"
		"33":"34"
		"31":@/^0.5cm/"33"
		"31":@/^0.8cm/"34"
		"32":@/_0.4cm/"34"
		}
  }
  \caption{The $k$th powers of a directed path of length $4$.}
  \label{fig_kthpower}
\end{figure}
As mentioned above, the best choice problems for those two cases have been solved (\cite{DVL}, \cite{GKMM}). It is a natural question what happens in between. Here we address this question by considering, we believe, the most natural evolution from a directed path to a linear order. Namely, we consider the $k$th powers, $k=1,2,\ldots,n-1$, of a directed path of $n$ elements. We show that the probability of success of the optimal algorithm for choosing a maximal element from the $k$th power of a directed path of length $n$ is of the order $n^{-1/(k+1)}$. We also find an optimal algorithm when the selector knows the distance in the underlying path between each two vertices that are joined by an edge in the induced graph.

\setcounter{temp}{\value{licznik}} The paper is organized as follows. In Section \stepcounter{temp}{\thetemp} we introduce basic definitions and notations. In Section \stepcounter{temp}{\thetemp} we find an optimal algorithm $\tau_n$ for the problem when the selector knows distances between connected vertices. In Section \stepcounter{temp}{\thetemp} we show that the probability of success of $\tau_n$, let us call it $p_n$, satisfies $p_n = \Theta(n^{-1/(k+1)})$ regardless of whether the selector knows distances. Section \stepcounter{temp}{\thetemp} discusses separately the case $k=n-1$ where our graph problem turns out to be the classical secretary problem with extra information.

\stepcounter{licznik}
\paragraph{\thelicznik. ~Definitions, notation and formal model.} A {\it directed graph} $G$ is a pair $G=(V,E)$, where $V$ is a set of {\it vertices} and $E$ is a set of {\it edges}, i.e., ordered pairs of elements from $V$ (which means that each edge has a direction). A {\it directed path} is a directed graph $P_n=(V_n, E_n)$ such that $V_n = \{v_1, v_2, \ldots, v_n\}$ and $E_n = \{(v_i, v_{i-1}): i \in \{2, 3, \ldots, n\} \}$. The {\it length} of $P_n$ is $n$. The {\it $k$th power} of a graph $G=(V,E)$ is the graph with the set of vertices $V$ and an edge between two vertices if and only if there is a path of length at most $k+1$ between them in $G$. We call $v \in V$ a {\it maximal element} or a {\it sink} if $v$ has no outgoing edges. For a directed graph $G$ the set of its maximal elements will be denoted by $Max(G)$ or $Max(V)$ if $E$ is known from the context. There is always only one sink in any power of a directed path and it will be denoted by $\mathds{1}$. For a graph $G=(V,E)$ its induced subgraph $G' = (W, E \cap W^2)$, $W \subseteq V$, is called a {\it connected component} if it is a maximal connected induced subgraph.

Let $\mathbb{N}$ denote the set of natural numbers, i.e., $\mathbb{N}=\{0, 1, 2, 3, \ldots\}$. Let us define a function $d_G:E \rightarrow \mathbb{N}$ by $d_G((v,w)) = l_G((v,w))-2$ where $l_G((v,w))$ is the length of the longest directed path in $G$ joining the vertices $v$ and $w$.

Let $G=(V,E)$ be a directed graph and let $S_n$ denote the family of all permutations of the set $V$. Let $\pi=(\pi_1, \pi_2, \ldots, \pi_n) \in S_n$. By $G_{(m)} = G_{(m)}(\pi) = (V_{(m)}, E_{(m)})$, $m \leq n$, we denote the subgraph of $G$ induced by $\{\pi_1, \ldots, \pi_m\}$, i.e.,
\[
\begin{split}
& V_{(m)} = \{\pi_1, \pi_2, \ldots, \pi_m\},\\
& E_{(m)} = \{(v_i, v_j) \colon \{v_i, v_j\} \subseteq \{\pi_1, \pi_2, \ldots, \pi_m\} \wedge (v_i, v_j) \in E \}.\\
\end{split}
\]
By $c(G_{(m)})$ we denote the number of connected components in $G_{(m)}$.

Let $(v_1, v_2, \ldots, v_m)$ be a sequence of distinct vertices of a directed graph $G = (V,E)$. Let $R \subseteq {\mathbb{N}}^2$. We write $(v_1, v_2, \ldots, v_m) \cong R$ if for all $i,j \leq m, i \neq j,$ $(v_i, v_j) \in E$ if and only if $(i,j) \in R$.

We will work with the probability space $(\Omega,\cal F,\mit P)$, where $\Omega = S_n$, ${\cal F} = {\cal{P}} (\Omega)$ and the probability measure $P:{\cal F} \rightarrow [0,1]$ is defined by $P(\{\pi\}) = 1/n!$ for each $\pi \in S_n$. Let
\begin{equation*}
{\cal F}_t = \sigma\{\{\pi \in \Omega:(\pi_1, \pi_2, \ldots, \pi_t) \cong R\}: R \subseteq {\mathbb{N}}^2\}, \hspace{20pt} 1 \leq t\leq n,
\end{equation*}
be our \textit{filtration} (a sequence of $\sigma$-algebras such that $\cal F_{\mit 1} \subseteq \cal F_{\mit 2} \subseteq  \ldots \cal F_{\mit n} \subseteq \cal F$). We call a random variable $\tau : \Omega \rightarrow \{1, 2, \ldots, n \}$ a \textit{stopping time} with respect to the filtration $(\cal F_{\mit t})^{\mit n}_{\mit t=1}$ if $\tau^{-1}(\{t\}) \in \cal F_{\mit t}$ for each $t \leq n$ (which means that the decision to stop is based only on past and present events). Let $D$ be a subset of vertices of the graph $G$ (i.e. $D \subseteq V$). An {\it optimal stopping time} for choosing an element from $D$ is any stopping time $\tau^*$ for which
\begin{equation*}
\mathbb{P}[\pi_{\tau^*} \in D] = \max \limits_{\tau \in {\cal T}} \mathbb{P}[\pi_{\tau} \in D],
\end{equation*}
where $\cal T$ is the set of all stopping times and $[\pi_{\tau} \in D]$ denotes the set $\{\pi~\in~\Omega~\colon \pi_{\tau(\pi)} \in D\}$. Throughout this paper $G$ will always be a power of a directed path and $D = \{\mathds{1}\}$. In the next section we also assume that the selector knows the value $d_G$ of each edge that appears in the induced graph.

\stepcounter{licznik}
\paragraph{\thelicznik. ~Optimal stopping time.}
Let $P_{n}^{k} = (V_{n}^{k}, E_{n}^{k})$ be the $k$th power of the directed path $P_n$ ($1 \leq k < n$). The first, the second and the third powers of the directed path $P_4$ are in Fig.\ref{fig_kthpower}a,\ref{fig_kthpower}b,\ref{fig_kthpower}c respectively. (Whenever the context is clear we omit the indices $n$ and $k$ for clarity of notation and write $P$ instead of $P_{n}^{k}$.) In this section we find an optimal stopping time $\tau_{n}$ for choosing the sink from $P_{n}^{k}$.

Let $\pi \in S_n$ be a random permutation of vertices from $V_{n}^{k}$ and $P_{(t)}$ be the graph induced by $\{ \pi_1, \pi_2, \ldots, \pi_t \}$. Suppose that $H = (W,F)$ is a connected component in $P_{(t)}$ and that $w$ and $z$ are two extreme vertices of $H$. Since the value $d_P(e)$ is known for each $e \in F$, one can tell how many of the remaining vertices are going to be placed between $w$ and $z$ on $P_n^k$. Let $b_t$ be the number of those remaining ``inner'' vertices. (Compare Fig.\ref{fig_kP}.)

Let $\tau_n(\pi) =
\min\{t \leq n: n-t = k (c(P_{(t)}) - 1) + b_t, \hspace{3 pt} \pi_t \in Max\{\pi_1, \pi_2, \ldots \pi_t\}\}$, using the convention $\min\emptyset = n$.

Note that $\tau_n$ tells the selector not to stop as long as there is still a chance to win in the future. (For instance, we have $\tau_9=6$ in Fig.\ref{fig_kP}.) The condition $n-t = k (c(P_{(t)}) - 1) + b_t$ means that the probability that $\mathds{1}$ is still to come is equal to zero because among $n-t$ remaining vertices we need at least $k (c(P_{(t)}) - 1)$ vertices to connect the components that we have at the time $t$ and $b_t$ is exactly the number of vertices that will join already existing components falling somewhere between their vertices. Thus the strategy $\tau_n$ can be stated exactly as the analogue of the optimal strategy for a directed path from \cite{GKMM}.

\vspace{0.2cm}

{\it Stop when there is a positive conditional (given history) probability that the presently examined candidate is the sink and the probability that the sink can be among the future candidates is equal to zero.}

\begin{theorem}
\label{th_optimal}
Let $\pi$ be a random permutation of vertices of $P_n^k$. For $P_n^k$, $1 \leq k < n$, the stopping time $\tau_n$ is optimal, i.e., 
$$
\mathbb{P}[\pi_{\tau_n} = \mathds{1}] = \max_{\tau \in \cal{T}} \mathbb{P}[\pi_{\tau} = \mathds{1}],
$$
where $\cal{T}$ is the set of all stopping times.
\end{theorem}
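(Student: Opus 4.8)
The plan is to view the theorem as an optimal stopping statement for the adapted reward process $\tilde Y_t := \mathbb{P}[\pi_t = \mathds{1}\mid\mathcal{F}_t]$, exploiting that $\mathbb{P}[\pi_\tau = \mathds{1}] = \mathbb{E}[\tilde Y_\tau]$ for every $\tau\in\mathcal{T}$. The first step is to evaluate $\tilde Y_t$. Conditionally on the information available at time $t$ --- the induced graph $P_{(t)}$ together with the known distances --- all embeddings of $P_{(t)}$ into $P_n^k$ are equally likely, and such an embedding amounts to choosing an ordering of the $c(P_{(t)})$ components along the path and distributing the ``free'' missing vertices (those not internal to a component and not among the $k$ vertices forced between two consecutive components, i.e. $s_t := (n-t) - b_t - k\,(c(P_{(t)})-1)$ of them) among the before-margin, the $c(P_{(t)})-1$ between-gaps and the after-margin. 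A direct count then gives
\[
\tilde Y_t \;=\; \frac{1}{\,s_t + c(P_{(t)})\,}\quad\text{on}\quad\{\pi_t\in Max(P_{(t)})\},\qquad \tilde Y_t = 0\ \text{otherwise},
\]
so that $\tau_n$ is precisely the rule ``stop at the first $t$ with $\pi_t\in Max(P_{(t)})$ and $s_t = 0$''.

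The second step reduces optimality of $\tau_n$ to one domination inequality. Put $U_t := \mathbb{E}[\tilde Y_{\tau_n}\mid\mathcal{F}_t]$; this is a martingale with $U_{\tau_n} = \tilde Y_{\tau_n}$, so as soon as we know $U_t \ge \tilde Y_t$ for all $t$ we obtain, for every $\tau\in\mathcal{T}$,
\[
\mathbb{P}[\pi_\tau = \mathds{1}] = \mathbb{E}[\tilde Y_\tau] \le \mathbb{E}[U_\tau] = \mathbb{E}[U_1] = \mathbb{E}[\tilde Y_{\tau_n}] = \mathbb{P}[\pi_{\tau_n} = \mathds{1}].
\]
Since $\tilde Y_t = 0$ off $\{\pi_t\in Max(P_{(t)})\}$, the inequality $U_t\ge\tilde Y_t$ has content only on that event, where it reads $\mathbb{P}[\pi_{\tau_n}=\mathds{1}\mid\mathcal{F}_t] \ge \tfrac{1}{s_t+c(P_{(t)})}$. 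In one-step-improvement language: not stopping at a sink with $s_t = 0$ is harmless (there $\mathds{1}$ has already arrived and cannot recur), while stopping prematurely at a sink with $s_t\ge 1$ secures only $\tfrac{1}{s_t+c(P_{(t)})}$, which must not exceed the value of continuing with $\tau_n$.

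The third and main step is to prove $\mathbb{P}[\pi_{\tau_n}=\mathds{1}\mid\mathcal{F}_t] \ge \tfrac{1}{s_t+c(P_{(t)})}$ on $\{\pi_t\in Max(P_{(t)})\}$. I would first isolate the clean description of the success event: $\pi_{\tau_n} = \mathds{1}$ holds if and only if $\mathds{1}$ is exposed exactly at the (random) first time $T$ at which $s_t$ reaches $0$ (note $s_t$ is non-increasing in $t$, $s_n=0$, and $s_t=0$ forces $\mathds{1}$ to have already arrived). Then, conditioning on $\mathcal{F}_t$, the remaining $n-t$ vertices arrive in uniform random order and fill the still-empty positions, and I would run a downward induction on $n-t$ following the pair $\big(c(P_{(t)}),\,s_t\big)$ together with the gap profile. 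An incoming vertex either lands in an internal gap of a component (then $c$ and $s$ are unchanged and the vertex is not a sink), or is absorbed by a component through a margin or a between-gap (then $c$ is unchanged and $s$ drops by the amount of path swallowed), or fuses two components or opens a new one (then $c$ changes by one and $s$ drops by a controlled amount); in each branch one must record whether the new vertex is itself a current sink, since that is what triggers $\tau_n$ to stop once $s=0$. Summing over the landing position and invoking the inductive bound should yield the stated inequality, with equality in most configurations and the slack coming exactly from the freedom in the margins.

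I expect the genuine work to lie in this third step: enumerating the transitions of $\big(c,s\big)$ and of the gap profile under a uniformly random arrival and pushing the induction through the merge / no-merge dichotomy --- especially through components whose span is short relative to $k$, where a single vertex can join two components at once, and through the boundary regime $s = 1$. By contrast the embedding count of step one is routine (though it must be carried out carefully enough that $\tilde Y_t = \tfrac{1}{s_t+c(P_{(t)})}$ holds on the nose), and the passage from the domination inequality to optimality in step two is soft.
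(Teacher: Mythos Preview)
Your Steps~1 and~2 are correct and match the paper. In particular, your formula
\[
\tilde Y_t \;=\; \frac{1}{\,s_t + c(P_{(t)})\,}\quad\text{on}\ \{\pi_t\in Max(P_{(t)})\}
\]
(with $s_t = (n-t)-b_t-k(c(P_{(t)})-1)$) is exactly equation~(\ref{eq_pi_m}) of the paper, obtained by the same embedding count, and your reduction of optimality to the domination inequality $\mathbb{P}[\pi_{\tau_n}=\mathds{1}\mid\mathcal{F}_t]\ge\tilde Y_t$ is sound.

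Where you diverge from the paper is Step~3. You propose a downward induction on $n-t$, tracking the transitions of $(c,s)$ and the full gap profile through the merge/no-merge dichotomy; you rightly flag this as the hard part, with delicate boundary cases. The paper bypasses this entirely. Instead of comparing ``stop now'' against $\tau_n$, it compares ``stop now'' against the crude rule $\bar\tau$ = ``wait for the \emph{next} maximal vertex''. Two one-line observations give the bound: (i) $\mathbb{P}[\mathds{1}\notin P_{(t)}\mid\mathcal{F}_t] = s_t/(s_t+c(P_{(t)}))$, by the same symmetry you used in Step~1; and (ii) among the $n-t$ remaining vertices at most $s_t$ can ever arrive as a sink of the induced graph (the other $b_t+k(c-1)$ are forced into internal or between-component slots), so conditional on $\mathds{1}$ being still to come it is the \emph{first} future sink with probability at least $1/s_t$. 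Multiplying,
\[
\mathbb{P}[\pi_{\bar\tau}=\mathds{1}\mid\mathcal{F}_t]\;\ge\;\frac{1}{s_t}\cdot\frac{s_t}{s_t+c(P_{(t)})}\;=\;\tilde Y_t.
\]
This one-step improvement, valid at every $t$ with $s_t>0$, is then wrapped up by a short maximality/contradiction argument (or, equivalently in your framework, iterated to reach $\tau_n$). Your inductive plan would presumably go through, but it is substantially more work than needed; the paper's key simplification is to compare against $\bar\tau$ rather than against $\tau_n$ itself, which removes all the case analysis you were anticipating.
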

\begin{proof}\textbf{1}
First, let us observe that it is reasonable to stop at time $m$ only if $\pi_m \in Max(P_{(m)})$. Of course, we should stop if $\mathbb{P}[\mathds{1} \in \{\pi_{m+1}, \ldots, \pi_n\}|\pi_m \in Max(P_{(m)})] = 0$. From \cite{GKMM} we know that if we play on a directed path and $\mathbb{P}[\mathds{1} \in \{\pi_{m+1}, \pi_{m+2}, \ldots, \pi_n\}|\pi_m \in Max(P_{(m)})] > 0$ then it always pays off to play further, for instance simple waiting for the next maximal element in the induced graph is profitable. Now we explain the correspondence between the game on a directed path and the game on its $k$th power. Since we assume that playing on the $k$th power one has the additional information $d_P$, one knows that at least $b_m$ of the remaining vertices are ``dummy''. They do not play any role in our game since we know that they are not going to appear as the maximal ones in the induced graph. We have $k (c(P_{(m)})-1)$ more ``dummy'' vertices that will appear immediately under the components seen a time $t=m$ (they are also not going to appear as the maximal ones in the induced graph). Note that it corresponds to the directed path case ($k=1$) at time $\tilde{m} = m + b_m + (k-1)c(P_{(m)}-1)$ when $\pi_{\tilde{m}}$ is maximal in the induced graph, the number of components of the induced graph is $c(P_{(m)})$ and we know about $c(P_{(m)})-1$ ``dummy'' vertices (supporting components at time $t= \tilde{m}$). Recall that probability that $\mathds{1}$ is still to come is positive thus as in the directed path case ($k=1$) we should play further(\cite{GKMM}) we should also play further in the $k$th power case since throughout the game we are going to obtain at least as much information as playing in the case $k=1$.
\end{proof}

\begin{figure}[!ht]
\centerline{
\xygraph{
!{<0cm,0cm>;<1cm,0cm>:<0cm,1cm>::}
!{(5.5,0.5)}*+{\bullet{\pi_1}}
!{(9.5,0.5)}*+{\bullet{\pi_1}}
!{(10.5,0.5)}*+{\bullet{\pi_2}}
!{(14,1)}*+{\bullet{\pi_1}}="1"
!{(14,0)}*+{\bullet{\pi_3}}="3"
!{(15,0.5)}*+{\bullet{\pi_2}}
"3":"1" ^{1}
!{(5.5,-0.8)}*+{t=1}
!{(5.5,-1.3)}*+{b_1=0}
!{(10,-0.8)}*+{t=2}
!{(10,-1.3)}*+{b_2=0}
!{(14.5,-0.8)}*+{t=3}
!{(14.5,-1.3)}*+{b_3=1}
}
}
\end{figure}
\begin{figure}[!ht]
\centerline{
\xygraph{
!{<0cm,0cm>;<1cm,0cm>:<0cm,1cm>::}
!{(5,1)}*+{\bullet{\pi_3}}="13"
!{(5,2)}*+{\bullet{\pi_1}}="11"
!{(6,1)}*+{\bullet{\pi_2}}="12"
!{(6,2)}*+{\bullet{\pi_4}}="14"
"13":"11" ^{1}
"12":"14" ^{1}
!{(9.5,0.5)}*+{\bullet{\pi_3}}="23"
!{(9.5,1.5)}*+{\bullet{\pi_5}}="25"
!{(9.5,2.5)}*+{\bullet{\pi_1}}="21"
!{(10.5,1)}*+{\bullet{\pi_2}}="22"
!{(10.5,2)}*+{\bullet{\pi_4}}="24"
"23":"25" ^{0}
"25":"21" ^{0}
"23":@/^0.6cm/"21" ^{1}
"22":"24" ^{1}
!{(13.7,0)}*+{\bullet{\pi_3}}="33"
!{(13.7,1)}*+{\bullet{\pi_5}}="35"
!{(13.7,2)}*+{\bullet{\pi_1}}="31"
!{(13.7,3)}*+{\bullet{\pi_6}}="36"
!{(15.3,1)}*+{\bullet{\pi_2}}="32"
!{(15.3,2)}*+{\bullet{\pi_4}}="34"
"33":"35" ^{0}
"35":"31" ^{0}
"31":"36" ^{0}
"33":@/^0.6cm/"31" ^{1}
"35":@/_0.6cm/"36" _{1}
"32":"34" ^{1}
!{(5.5,-0.8) }*+{t=4}
!{(5.5,-1.3) }*+{b_4=2}
!{(10,-0.8) }*+{t=5}
!{(10,-1.3) }*+{b_5=1}
!{(14.5,-0.8) }*+{t=6}
!{(14.5,-1.3) }*+{b_6=1}
}
}
\end{figure}
\begin{figure}[!ht]
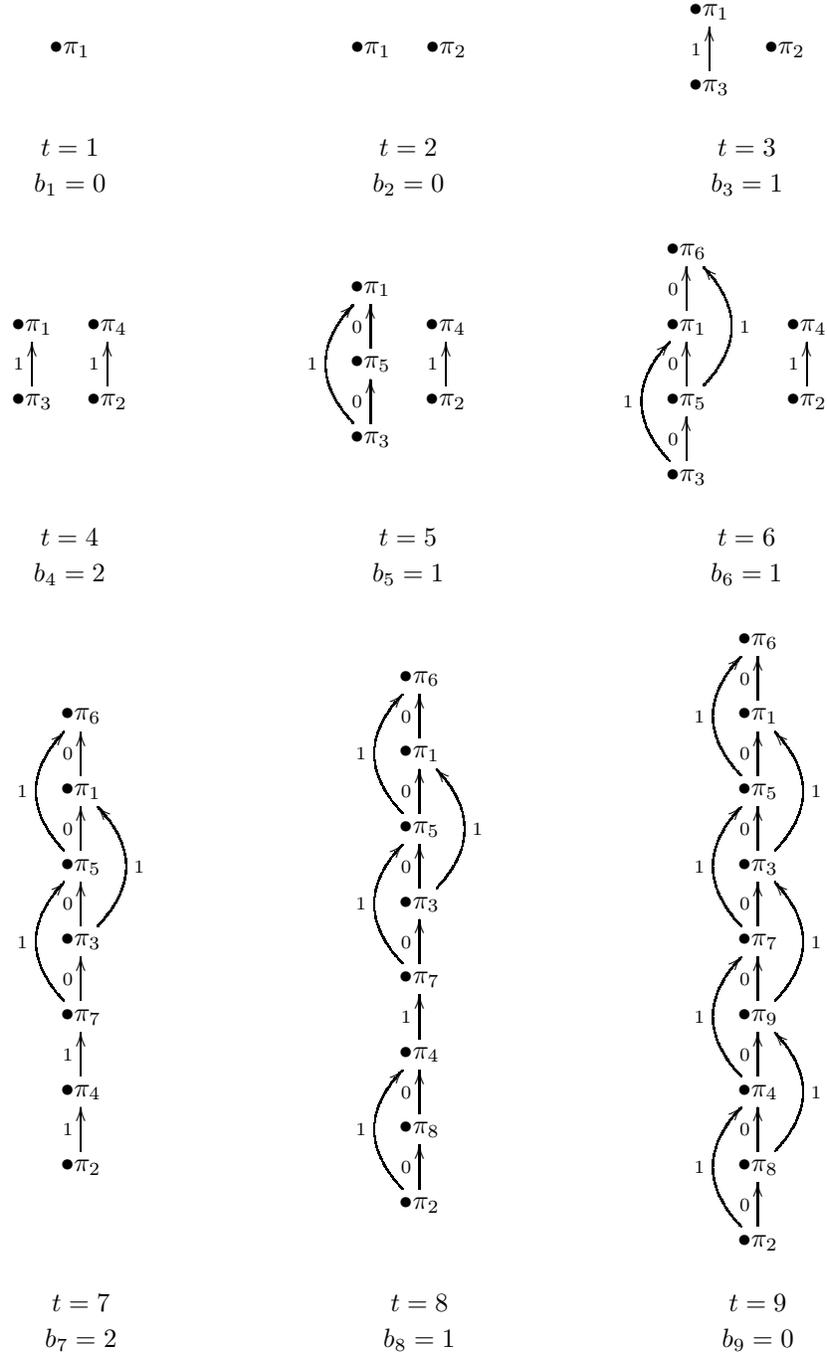

\centerline{
\xygraph{
!{<0cm,0cm>;<1cm,0cm>:<0cm,1cm>::}
!{(5.5,1)}*+{\bullet{\pi_2}}="12"
!{(5.5,2)}*+{\bullet{\pi_4}}="14"
!{(5.5,3)}*+{\bullet{\pi_7}}="17"
!{(5.5,4)}*+{\bullet{\pi_3}}="13"
!{(5.5,5)}*+{\bullet{\pi_5}}="15"
!{(5.5,6)}*+{\bullet{\pi_1}}="11"
!{(5.5,7)}*+{\bullet{\pi_6}}="16"
"12":"14" ^{1}
"14":"17" ^{1}
"17":"13" ^{0}
"13":"15" ^{0}
"15":"11" ^{0}
"11":"16" ^{0}
"17":@/^0.6cm/"15" ^{1}
"13":@/_0.6cm/"11" _{1}
"15":@/^0.6cm/"16" ^{1}
!{(10,0.5)}*+{\bullet{\pi_2}}="22"
!{(10,1.5)}*+{\bullet{\pi_8}}="28"
!{(10,2.5)}*+{\bullet{\pi_4}}="24"
!{(10,3.5)}*+{\bullet{\pi_7}}="27"
!{(10,4.5)}*+{\bullet{\pi_3}}="23"
!{(10,5.5)}*+{\bullet{\pi_5}}="25"
!{(10,6.5)}*+{\bullet{\pi_1}}="21"
!{(10,7.5)}*+{\bullet{\pi_6}}="26"
"22":"28" ^{0}
"28":"24" ^{0}
"24":"27" ^{1}
"27":"23" ^{0}
"23":"25" ^{0}
"25":"21" ^{0}
"21":"26" ^{0}
"22":@/^0.6cm/"24" ^{1}
"27":@/^0.6cm/"25" ^{1}
"23":@/_0.6cm/"21" _{1}
"25":@/^0.6cm/"26" ^{1}
!{(14.5,0)}*+{\bullet{\pi_2}}="32"
!{(14.5,1)}*+{\bullet{\pi_8}}="38"
!{(14.5,2)}*+{\bullet{\pi_4}}="34"
!{(14.5,3)}*+{\bullet{\pi_9}}="39"
!{(14.5,4)}*+{\bullet{\pi_7}}="37"
!{(14.5,5)}*+{\bullet{\pi_3}}="33"
!{(14.5,6)}*+{\bullet{\pi_5}}="35"
!{(14.5,7)}*+{\bullet{\pi_1}}="31"
!{(14.5,8)}*+{\bullet{\pi_6}}="36"
"32":"38" ^{0}
"38":"34" ^{0}
"34":"39" ^{0}
"39":"37" ^{0}
"37":"33" ^{0}
"33":"35" ^{0}
"35":"31" ^{0}
"31":"36" ^{0}
"32":@/^0.6cm/"34" ^{1}
"38":@/_0.6cm/"39" _{1}
"34":@/^0.6cm/"37" ^{1}
"39":@/_0.6cm/"33" _{1}
"37":@/^0.6cm/"35" ^{1}
"33":@/_0.6cm/"31" _{1}
"35":@/^0.6cm/"36" ^{1}
!{(5.5,-0.8)}*+{t=7}
!{(5.5,-1.3)}*+{b_7=2}
!{(10,-0.8)}*+{t=8}
!{(10,-1.3)}*+{b_8=1}
!{(14.5,-0.8)}*+{t=9}
!{(14.5,-1.3)}*+{b_9=0}
}
}
\caption{Induced graphs of $P_9^2$ at time $t$ for $\pi = (v_2, v_9, v_4, v_7, v_3, v_1, v_5, v_8, v_6)$.}
\label{fig_kP}
\end{figure}

As sometimes the intuitive type argument may contain a hidden bug, to be on the safe side, we also present below a fully formal proof of the optimality of $\tau_n$.

\vspace{0.2cm}

\begin{proof}\textbf{2}
This proof is analogous to the one that shows the optimality of $\tau_n$ for $k=1$ presented in \cite{GKMM}. At first, let us observe that it is reasonable to stop only if the currently examined vertex is maximal in the induced graph. Now, aiming for a contradiction, let us assume that there exists a stopping time $\tau$ such that $\mathbb{P}[\pi_{\tau}=\mathds{1}] > \mathbb{P}[\pi_{\tau_n}=\mathds{1}]$ which is optimal and that 
there is no optimal stopping time $\tilde{\tau} \geq \tau$ and $\tilde{\tau} \neq \tau$. By our observation we may also assume that $\tau(\pi)=t$ if and only if $\pi_t \in Max(P_{(t)})$ or $t=n$.

Whenever $\pi_n = \mathds{1}$ we have $\tau_n(\pi) = \mathds{1}$ thus
\[
\mathbb{P}[\pi_{\tau} = \mathds{1}|\tau=n] \leq \mathbb{P}[\pi_{\tau_n}=\mathds{1}|\tau = n].
\]
Hence let us now consider the event $[\tau < n]$.

We have $\tau(\pi) = m < n$ and $\pi_m \in Max(P_{(m)})$. Let $a_m = k(c(P_{(m)})-1)$. Let us calculate the probability that $\tau$ wins counting simply all the possible settings of the remaining vertices. We need at least $a_m$ out of the remaining vertices to connect the components of $P_{(m)}$ (which refers to the term ${n-m \choose a_m} a_m!$ in (\ref{eq_pi_m})). Moreover, we need $b_m$ more vertices out of the remaining ones that will fall between the extreme vertices of the components in $P_{(m)}$ (which refers to the term ${n-m-a_m \choose b_m} b_m!$). Finally, all the $n-m-a_m-b_m$ remaining vertices may be arbitrarily permuted together with $c(P_{(m)})$ components (which refers to the term $(n-m-a_m-b_m+c(P_{(m)}))!$). If we wish to have the component containing $\pi_m$ at the top of the whole graph, then we can arbitrarily permute the $n-m-a_m-b_m$ remaining vertices with $c(P_{(m)})-1$ components (which refers to the term $(n-m-a_m-b_m+c(P_{(m)})-1)!$).
Hence we get
\begin{equation}
\label{eq_pi_m}
\begin{split}
\mathbb{P}[\pi_m = & \mathds{1}|\pi_m \in Max(P_{(m)})] = \\
& \frac{{n-m \choose a_m} a_m! {n-m-a_m \choose b_m} b_m! (n-m-a_m-b_m+c(P_{(m)})-1)!} {{n-m \choose a_m} a_m! {n-m-a_m \choose b_m} b_m! (n-m-a_m-b_m+c(P_{(m)}))!} =\\
& \frac{1}{n-m-a_m-b_m+c(P_{(m)})}.
\end{split}
\end{equation}
Since all the components of $P_{(m)}$ have the same chance to be placed at the top of the whole underlying graph, we obtain
\[
\mathbb{P}[\mathds{1} \in P_{(m)}|\pi_m \in Max(P_{(m)})] = \frac{c(P_{(m)})}{n-m-a_m-b_m+c(P_{(m)})}
\]
which implies
\[
\mathbb{P}[\mathds{1} \notin P_{(m)}|\pi_m \in Max(P_{(m)})] = \frac{n-m-a_m-b_m}{n-m-a_m-b_m+c(P_{(m)})}.
\]
Let us consider the following stopping time
\[
\overline{\tau}(\pi) = \left\{  \begin{array}{ll}
\min\{t>m: \pi_t \in Max(P_{(t)}) \} & \textrm{if } \tau(\pi) = m < n,\\
n & \textrm{in the remaining cases,}
\end{array}\right.
\]
using the convention $\min\emptyset = n$. Because $\tau \neq \tau_n$ there exists $m$ such that $\{t>m: \pi_t \in Max(P_{(t)}) \} \neq \emptyset$. We will show that 
\[
\mathbb{P}[\pi_{\overline{\tau}(\pi)}=\mathds{1}|\pi_m \in Max(P_{(m)})] \geq \mathbb{P}[\pi_m = \mathds{1}|\pi_m \in Max(P_{(m)})].
\]
Note that among $n-m$ vertices that are still to come there are at most $n-m-a_m-b_m$ which may arrive as the maximal ones in the induced graph. Therefore if $\mathds{1}$ is among the remaining vertices then with probability at least $1/(n-m-a_m-b_m)$ it will appear as the first maximal vertex in the induced graph after time $m$ (note that whenever $\mathds{1}$ is among the remaining vertices, $n-m-a_m-b_m>0$). Therefore
\[
\begin{split}
\mathbb{P}[\pi_{\overline{\tau}(\pi)}=& \mathds{1}|\pi_m \in Max(P_{(m)})] = \\ & \mathbb{P}[\overline{\tau}(\pi)=\mathds{1}|\mathds{1} \notin P_{(m)}, \pi_m \in Max(P_{(m)})] \mathbb{P}[\mathds{1} \notin P_{(m)}|\pi_m \in Max(P_{(m)})] \geq \\
& \frac{1}{(n-m-a_m-b_m)} \frac{(n-m-a_m-b_m)}{(n-m-a_m-b_m+c(P_{(m)}))} = \\
& \mathbb{P}[\pi_m = \mathds{1}|\pi_m \in Max(P_{(m)})].
\end{split}
\]
Thus we have found the stopping time $\overline{\tau}$ which is at least equally effective as $\tau$ and stops later than $\tau$ which contradicts the assumption that there is no optimal stopping time $\tilde{\tau} \geq \tau$, $\tilde{\tau} \neq \tau$. This proves the optimality of $\tau_n$.
\end{proof}

\stepcounter{licznik}
\paragraph{\thelicznik. ~The probability of success.}
In this section we show that the probability of success of the optimal algorithm for choosing the sink from $P_n^k$ is of the order $n^{-1/(k+1)}$, $1 \leq k < n$. (Again we write for short $P$ instead of $P_n^k$.) We show this result also for the case when the selector knows the values of $d_P$ in the induced graph. We give the exact formula for the probability of success in the latter case.

\begin{theorem}
\label{th_opt_prob}
Let $\pi$ be a random permutation of vertices of $P_n^k$. Then
\begin{equation}
\label{eq_opt_probabil}
\begin{split}
\mathbb{P}[\pi_{\tau_n} = \mathds{1}] = & \sum_{m = \lceil \frac{n+k}{k+1} \rceil}^{n} \frac{1}{m{n \choose m}}\sum_{h=0}^{\lfloor (n-m)/k \rfloor} \sum\limits_{\begin{subarray}{c} a_1,a_2,\ldots,a_{k-1}: \\ a_1+2 a_2 + \ldots + (k-1)a_{k-1}=\\n-m-kh\end{subarray}}{m-1 \choose h+a_1+a_2+\ldots+a_{k-1}} \cdot \\
& {h+a_1+\ldots+a_{k-1} \choose {h,a_1,a_2,\ldots,a_{k-1}}}.
\end{split}
\end{equation}
\end{theorem}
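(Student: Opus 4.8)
Throughout, identify the vertices of $P_n^k$ with the positions $1,2,\dots,n$ of the underlying path, so that $\mathds 1$ is position $1$, an endpoint of the path and a sink in every induced subgraph containing it, while two positions are adjacent in $P_n^k$ exactly when they differ by at most $k$. Then $P_{(t)}$ is determined by the set of positions arrived by time $t$: its connected components are the maximal sets of arrived positions whose consecutive elements differ by at most $k$, two consecutive components are separated by a run of at least $k$ missing positions, and $b_t$ is the number of missing positions lying strictly inside some component.

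The plan is first to identify the induced graphs at which $\tau_n$ stops. Split the $n-t$ missing positions into those before the leftmost component, those after the rightmost component, the $c(P_{(t)})-1$ runs separating consecutive components (each of length at least $k$), and the $b_t$ internal ones. Then $n-t = k(c(P_{(t)})-1)+b_t$ holds exactly when there are no missing positions before the leftmost or after the rightmost component and every separating run has length exactly $k$; equivalently, recording ``arrived''/``missing'' along the path gives a word in $\{A,M\}^n$ that begins and ends with $A$ and whose maximal $M$-runs all have length at most $k$. Call the corresponding set of $m$ arrived positions a \emph{stopping configuration of time $m$} (it always contains position $1$). Since $\tau_n$ is the first time this equality holds at a maximal current vertex, whenever $\tau_n=m$ the set $\{\pi_1,\dots,\pi_m\}$ is a stopping configuration of time $m$.

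Next I would show that $[\pi_{\tau_n}=\mathds 1]$ is the disjoint union, over $m$ and over stopping configurations $S$ of time $m$, of the events $\bigl[\{\pi_1,\dots,\pi_m\}=S\ \text{and}\ \pi_m=\text{position }1\bigr]$. Indeed $\pi_{\tau_n}=\mathds 1$ means position $1$ arrives exactly at time $\tau_n$, and then $\{\pi_1,\dots,\pi_{\tau_n}\}$ is a stopping configuration by the previous paragraph. Conversely, if $\{\pi_1,\dots,\pi_m\}=S$ is a stopping configuration and position $1$ is the last of $S$ to arrive, then $\tau_n=m$: for $t<m$ the positions arrived so far form a subset of $S$ missing position $1$, so the induced graph with its known distances is consistent with a placement in which $\mathds 1$ has not yet arrived, hence the conditional probability that $\mathds 1$ is still to come is positive and, by the criterion recalled when $\tau_n$ was defined, the stopping rule is not met at time $t$; while at time $m$ it is met, position $1$ being a sink and $S$ a stopping configuration. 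Each event in the union has probability $(m-1)!\,(n-m)!/n! = 1/\bigl(m\binom{n}{m}\bigr)$, so $\mathbb P[\pi_{\tau_n}=\mathds 1]=\sum_m N_m\big/\bigl(m\binom{n}{m}\bigr)$ with $N_m$ the number of stopping configurations of time $m$.

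Finally I would count $N_m$: such a configuration is a word $A\,B_1\,A\,B_2\cdots A\,B_{m-1}\,A$ with each $B_i$ a possibly empty run of at most $k$ copies of $M$ and $|B_1|+\dots+|B_{m-1}|=n-m$. Grouping the blocks by length — $h$ of them of length exactly $k$ and, for $1\le j\le k-1$, $a_j$ of them of length exactly $j$ — forces $kh+\sum_{j=1}^{k-1} j\,a_j=n-m$, and the number of words with a given profile is $\binom{m-1}{h+a_1+\dots+a_{k-1}}$ (choosing the non-empty blocks) times the multinomial coefficient $\binom{h+a_1+\dots+a_{k-1}}{h,a_1,\dots,a_{k-1}}$ (assigning the lengths). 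Summing over profiles gives the inner double sum of (\ref{eq_opt_probabil}); since $N_m=0$ unless $0\le n-m\le k(m-1)$, i.e.\ unless $\lceil(n+k)/(k+1)\rceil\le m\le n$, this reproduces (\ref{eq_opt_probabil}). The step needing the most care is the structural description above: recognizing that $\tau_n$'s condition concerns what the selector can infer from the induced graph — so that it fails as long as position $1$ has not arrived — and that the ``final'' induced graphs are exactly the $\{A,M\}$-words described; the reduction and the enumeration are then routine.
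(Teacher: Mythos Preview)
Your proof is correct and takes a cleaner, more direct route than the paper's. The paper decomposes $\mathbb{P}[\pi_{\tau_n}=\mathds 1]$ by conditioning on the event $A_m=B_m\cap C_m$ (stopping conditions met at time $m$) and then further on the number $h+1$ of components; it computes $\mathbb{P}[C_m]$, $\mathbb{P}[B_m\mid C_m]$ and $\mathbb{P}[\pi_{\tau_n}=\mathds 1\mid A_m]$ separately, conditioning each on $h$, and obtains the formula only after a telescoping cancellation $\dfrac{W_m}{T_m}\cdot\dfrac{T_m}{mW_m}\cdot\dfrac{W_m}{\binom{n}{m}}$. You bypass all of this: your structural characterisation of the equality $n-t=k(c(P_{(t)})-1)+b_t$ as ``both endpoints present and all $M$-runs of length $\le k$'' lets you identify $[\pi_{\tau_n}=\mathds 1]$ directly with the disjoint union of events $[\{\pi_1,\dots,\pi_m\}=S,\ \pi_m=1]$ over stopping configurations $S$, each of probability $1/\bigl(m\binom{n}{m}\bigr)$, so that the problem reduces to counting compositions of $n-m$ into $m-1$ parts bounded by $k$. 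The key observation that makes your shortcut work (and that the paper uses only implicitly) is that if position $1$ has not yet arrived then the equality condition necessarily fails, so $\tau_n$ cannot have fired earlier; this is exactly what your structural description of $C_m$ yields. Your argument is more elementary and avoids the auxiliary quantities $V_{m,h},W_m,T_m$; the paper's approach, on the other hand, makes the dependence on the number of components explicit, which is informative but not needed for the formula itself.
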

\begin{proof}
Let
\[
\begin{split}
& B_m = [\pi_m \in Max\{\pi_1, \pi_2, \ldots, \pi_m\}],\\
& C_m = [n-m = k (c(P_{(m)}) - 1) + b_m],\\
& A_m = B_m \cap C_m.
\end{split}
\]
Since $C_m = \emptyset$ for $m<(n+k)/(k+1)$, we have
\[
\mathbb{P}[\pi_{\tau_n} = \mathds{1}] = \sum_{m = \lceil \frac{n+k}{k+1} \rceil}^{n} \mathbb{P}[\pi_{\tau_n}=\mathds{1}|A_m]\mathbb{P}[A_m] = \sum_{m = \lceil \frac{n+k}{k+1} \rceil}^{n} \mathbb{P}[\pi_{\tau_n}=\mathds{1}|A_m]\mathbb{P}[B_m|C_m] \mathbb{P}[C_m].
\]
Note that $C_m$ means that at the time $m$ all the remaining vertices are going to fall between the vertices of $P_{(m)}$ (none of the remaining vertices can be one of the two extreme vertices of $P_n$). Note that, since we deal with the $k$th power of a directed path, not more than $k$ vertices of those that are still to come can be finally placed in $P_n$ next to each other. Let
\[
V_{m,h} = \sum\limits_{\begin{subarray}{c} a_1,a_2,\ldots,a_{k-1}:\\ a_1+ 2 a_2\ldots + (k-1)a_{k-1}=\\n-m-kh\end{subarray}}{m-1 \choose h+a_1+\ldots+a_{k-1}} {h+a_1+\ldots+a_{k-1} \choose {h,a_1,a_2,\ldots,a_{k-1}}}.
\]
We have (it is explained below)
\begin{equation}
\label{eq0}
\begin{split}
\mathbb{P}[C_m] = & \frac{1}{{n \choose m}} \sum_{h=0}^{\lfloor (n-m)/k \rfloor} V_{m,h}.
\end{split}
\end{equation}
In this formula the $h$th term corresponds to $P_{(m)}$ having $h+1$ components. The term ${m-1 \choose h+a_1+\ldots+a_{k-1}}$ refers to the way we choose spaces between $m$ vertices of $P_{(m)}$ for the elements that are still to come ($h$ spaces that will consist of $k$ vertices, $a_i$ spaces that will consist of $i$ elements for $i=1,2,\ldots,k-1$).
The term ${h+a_1+\ldots+a_{k-1} \choose h}$ refers to choosing that spaces that consist of $k$ vertices and separate components. The expression ${a_1+\ldots+a_{k-1} \choose a_1} {a_2+\ldots+a_{k-1} \choose a_2}\ldots {a_{k-1} \choose a_{k-1}}$ refers to choosing $a_1$ spaces that consist of $1$ vertex, $a_2$ spaces that consist of $2$ spaces and so on. Since we need $kh$ of the remaining vertices to form $h$ spaces, we are left with $n-m-kh$ vertices that we use to form spaces of cardinality less than $k$ (which explains why we sum over $a_1, a_2, \ldots, a_{k-1}$ satisfying $a_1+a_2+\ldots+a_{k-1}=n-m-kh$). From $n-m$ remaining vertices we may form at most $\lfloor (n-m)/k \rfloor$ groups of $k$ vertices, which explains the upper limit of summation. Let $W_m = \sum_{h=0}^{\lfloor (n-m)/k \rfloor} V_{m,h}$.

We have
\[
\mathbb{P}[B_m| C_m] = \sum_{h=0}^{\lfloor (n-m)/k\rfloor} \mathbb{P}[B_m|[c(P_{(m)})=h+1] \cap C_m] \mathbb{P}[c(P_{(m)})=h+1|C_m].
\]
Obviously,
\begin{equation}
\label{eq1}
\mathbb{P}[B_m|[c(P_{(m)})=h+1] \cap C_m] = \mathbb{P}[B_m|c(P_{(m)})=h+1] = \frac{|Max(P_{(m)})|}{m} = \frac{h+1}{m}.
\end{equation}
We also have
\begin{equation}
\label{eq2}
\mathbb{P}[c(P_{(m)})=h+1|C_m] = \frac{V_{m,h}}{W_m}.
\end{equation}
Thus
\begin{equation}
\label{eq3}
\mathbb{P}[B_m|C_m] = \frac{1}{m W_m} \sum_{h=0}^{\lfloor (n-m)/k\rfloor}(h+1)V_{m,h}.
\end{equation}
Let $T_m = \sum_{h=0}^{\lfloor (n-m)/k\rfloor}(h+1)V_{m,h}$. We have
\[
\mathbb{P}[\pi_{\tau_n} = \mathds{1}|A_m] = \sum_{h=0}^{\lfloor (n-m)/k\rfloor} \mathbb{P}[\pi_{\tau_n} = \mathds{1}|[c(P_{(m)})=h+1] \cap A_m] \mathbb{P}[c(P_{(m)})=h+1 | A_m]
\]
and
\[
\mathbb{P}[\pi_{\tau_n} = \mathds{1}|[c(P_{(m)})=h+1] \cap A_m] = \frac{1}{h+1},
\]
and, by (\ref{eq0}), (\ref{eq1}), (\ref{eq2}) and (\ref{eq3}),
\[
\mathbb{P}[c(P_{(m)}) = h+1 | A_m] = \frac{(h+1)V_{m,h}}{T_m}.
\]
Hence
\[
\mathbb{P}[\pi_{\tau_n} = \mathds{1}|A_m] = \sum_{h=0}^{\lfloor (n-m)/k\rfloor} \frac{1}{h+1} \frac{(h+1)V_{m,h}}{T_m} = \frac{W_m}{T_m}.
\]
Thus
\[
\mathbb{P}[\pi_{\tau_n} = \mathds{1}|A_m]\mathbb{P}[B_m|C_m]\mathbb{P}[C_m] = \frac{W_m}{T_m} \frac{T_m}{m W_m} \frac{W_m}{{n \choose m}}= \frac{W_m}{m {n \choose m}}
\]
which finally gives
\[
\mathbb{P}[\pi_{\tau_n} = \mathds{1}] = \sum_{m = \lceil (n+k)/(k+1) \rceil}^{n} \frac{W_m}{m{n \choose m}}.
\]
\end{proof}

For $k=2$ (\ref{eq_opt_probabil}) takes much simpler form:
\[
\mathbb{P}[\pi_{\tau_n} = \mathds{1}] = \sum_{m = \lceil (n+2)/3 \rceil}^{n} \frac{1}{m{n \choose m}} \sum_{h=0}^{\lfloor (n-m)/k\rfloor} {m-1 \choose n-m-h} {n-m-h \choose h}.
\]

Now we will show that $\mathbb{P}[\pi_{\tau_n} = \mathds{1}]$ is of the order $n^{-1/(k+1)}$, $1 \leq k < n$. We will not use (\ref{eq_opt_probabil}). In order to prove this result we use the continuous time approach to arrivals of vertices.

Recall that $V_n = \{v_1, v_2, \ldots, v_{n}\}$ and $E_n = \{(v_i, v_{i-1}), i=2,3,\ldots,n\}$ are the sets of vertices and edges of $P_n$ respectively; thus $v_{1} = \mathds{1}$ is the sink. Note that if $\mathds{1} = \pi_t$ and $v_n$ is still to appear at the time $t$, then at the time $t$ the condition $n-t = k (c(P_{(t)}) - 1) + b_t$ is not satisfied (we have then $n-t > k (c(P_{(t)}) - 1) + b_t$). Thus in order to have $\pi_{\tau_n}=\mathds{1}$ $v_n$ must precede $\mathds{1}$ in $\pi$. Note also that in the two easy cases, when $k=n-2$ or $k=n-1$ this condition is also sufficient and then $\mathbb{P}[\pi_{\tau_n}=\mathds{1}] = 1/2$. Throughout the rest of this section we assume that $1 \leq k < n-2$ (although the case $k=1$ was solved in \cite{GKMM}).

Let us associate with each $v_i$, $i=1,2,\ldots,n$, a random variable $A_i$ of a value drawn uniformly from the interval $[0,1]$, where all $A_i$'s are independent. Let us treat $A_i$ as the time of arrival of $v_i$. We have thus generated the uniform random order of arrivals of vertices from $P_n^k$. The arrival time of the sink will be denoted by $p$ ($A_1=p$). Note that all the permutations of vertices are still equiprobable. Since all $A_i$'s are independent, if the arrival time of the sink is $A_1=p$, the probability that a particular vertex appears before the sink is equal to $p$.

Let us define the following sequence of the indicator random variables
\begin{equation}
\label{def_Xi}
X_i^{(p)} = \left\{  \begin{array}{ll}
1 & \textrm{if} \hspace{3 pt} A_{i+1}>p \wedge A_{i+2}>p \wedge \ldots \wedge A_{i+k+1}>p, \\
0 & \textrm{otherwise,}
 \end{array}\right.
\end{equation}
for $1 \leq i \leq n-k-2$. Let also $X^{(p)} = \sum_{i=1}^{n-k-2}X_i^{(p)}$. The equality $X^{(p)}=0$ means that in the induced graph at the time $p$ there are no two components such that they are neighbours (no other element from the induced graph is between them) and the distance between them in $P_n$ is greater than $k+2$ (by the distance between two components we understand the length of the shortest path in $P_n$ that joins vertices from the different components). Hence $X^{(p)}=0$ and $A_n<p$ ensure that at the time $p$ when the sink comes (suppose $\mathds{1} = \pi_t$) the condition $n - t = k (c(P_{(t)}) - 1) + b_t$ is satisfied. Also if $A_n>p$ or $X^{(p)}>0$ we have $n-t>k (c(P_{(t)}) - 1) + b_t$. Thus $\pi_{\tau_n} = \mathds{1}$ if and only if $X^{(p)}=0$ and $A_n<p$. Thus, since $\mathbb{P}[\pi_{\tau_n} = \mathds{1} | A_1=p] = \mathbb{P}[X^{(p)}=0,A_n<p | A_1=p]$ and all $p$'s are equiprobable, the following lemma holds (compare \cite{BBMS}, Lemma 3.2).

\begin{lemma}
\label{l_integral}
For $P_n^k$ \rm{(}$1 \leq k < n-2$\rm{)} we have
\[
\mathbb{P}[\pi_{\tau_n} = \mathds{1}] = \int\limits_{0}^{1} {\mathbb{P}[X^{(p)}=0, A_n<p|A_1=p] \,dp}.
\]
\end{lemma}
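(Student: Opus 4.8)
The plan is to use the continuous-time embedding just introduced and then integrate out the arrival time of the sink. The first step is to make explicit that this embedding reproduces the original model: reading off the permutation $\pi$ from the increasing rearrangement of the independent uniform arrival times $A_1,\dots,A_n$ pushes the product uniform measure forward to the uniform law on $S_n$, so $\mathbb{P}[\pi_{\tau_n}=\mathds{1}]$ computed in this model is exactly the quantity we want. Consequently the event $[\pi_{\tau_n}=\mathds{1}]$ and all the relevant statistics ($c(P_{(t)})$, $b_t$, and $\tau_n$ itself) are measurable functions of $(A_1,\dots,A_n)$, so conditioning on $A_1=p$ is legitimate through regular conditional probabilities.

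The second step is to transcribe the combinatorial description already given in the paragraph preceding the lemma: on the event $[A_1=p]$ one has $\pi_{\tau_n}=\mathds{1}$ if and only if $X^{(p)}=0$ and $A_n<p$. I would restate the reasoning there — as long as $\mathds{1}$ has not yet arrived, the stopping condition $n-t=k(c(P_{(t)})-1)+b_t$ cannot hold, so $\tau_n$ does not stop before the sink's arrival time $t^{*}$; at $t^{*}$ the vertex $\pi_{t^{*}}=\mathds{1}$ is maximal in $P_{(t^{*})}$ and the stopping condition holds exactly when $A_n<p$ (so $v_n$ has already come) and $X^{(p)}=0$ (no two neighbouring induced components are separated by a gap exceeding $k+2$ in $P_n$); and if $A_n>p$ or $X^{(p)}>0$, then $\tau_n>t^{*}$, so $\tau_n$ stops on a non-sink maximal vertex. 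These three observations are precisely the ones in the preceding paragraph.

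The third and final step is the conditioning identity itself. Since $A_1$ is uniformly distributed on $[0,1]$, the law of total probability with respect to its (constant) density is just integration against Lebesgue measure on $[0,1]$, whence
\[
\mathbb{P}[\pi_{\tau_n}=\mathds{1}]=\int_0^1\mathbb{P}[\pi_{\tau_n}=\mathds{1}\mid A_1=p]\,dp=\int_0^1\mathbb{P}[X^{(p)}=0,\,A_n<p\mid A_1=p]\,dp,
\]
the last equality being the substitution from the second step. This is the assertion. I anticipate no real difficulty beyond the bookkeeping in the second step — namely, verifying that the deterministic rule $\tau_n$, phrased through $b_t$ and $c(P_{(t)})$, translates precisely into "$X^{(p)}=0$ and $A_n<p$" on the arrival times — and that translation is essentially spelled out in the text already; the rest is the routine conditioning argument above.
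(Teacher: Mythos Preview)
Your proposal is correct and follows the same approach as the paper, which in fact gives no separate proof of this lemma beyond the equivalence $[\pi_{\tau_n}=\mathds{1}]\Leftrightarrow[X^{(p)}=0,\,A_n<p]$ established in the paragraph preceding the statement and the remark that ``all $p$'s are equiprobable''. Your three steps simply spell out that sketch in more detail; the only (harmless) imprecision is that when $A_n>p$ or $X^{(p)}>0$ the rule $\tau_n$ may land at $n$ by convention rather than on a genuinely maximal vertex, but in either case $\pi_{\tau_n}\neq\mathds{1}$, which is all that is needed.
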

{\begin{flushright} \vspace*{-1mm} \mbox{$\Box$} \end{flushright}}
Recall the definitions of the gamma and beta functions.
\[
\begin{split}
\Gamma(x)=\int_{0}^{\infty} {t^{x-1} \exp\{-t\} \,dt}, \hspace{10pt} B(a,b)=\int_{0}^{1} {t^{a-1}(1-t)^{b-1} \,dt}, \hspace{5pt} x>0, a>0, b>0.
\end{split}
\]

Since $\tau_n$ is optimal any stopping time gives a lower bound for its effectiveness. Let $\tau_p^*$ be defined as follows:

\vspace{0.2cm}

\textit{Flip an asymmetric coin, having some probability $p$ of coming down tails, $n$ times. If it comes down tails $M$ times reject the first $M$ elements. After this time pick the first element which is maximal in the induced graph. In other words, $\tau_p^*$ is equal to the first $j>M$ such that $\pi_j \in Max(P_{(j)})$. If no such $j$ is found let $\tau_p^* = n$.}

\vspace{0.2cm}

The randomization used in the above definition was introduced by Preater in \cite{JP} who used the fact stated in Lemma \ref{l_indep} (see also \cite{MS}, Lemma 3.1).

\begin{lemma}
\label{l_indep}
Let $\pi \in S_n$ be a random permutation of vertices in $V$. Suppose that we have a coin that comes down tails with probability $p$. Let $M$ denote the number of tails in $n$ tosses. Then all vertices from $V$ appear in $\{\pi_1, \pi_2, \ldots, \pi_M\}$ with probability $p$ independently.
\end{lemma}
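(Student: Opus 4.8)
The plan is to pin down the exact joint law of the random permutation $\pi$ and the (independent) random cut-off $M$ by computing, for each subset $S \subseteq V$, the probability that $S$ is precisely the set of vertices arriving in the first $M$ positions, and then to recognise this law as the one produced by flagging each vertex independently with probability $p$.

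First I would introduce the random set $R = \{\pi_1, \pi_2, \ldots, \pi_M\}$ and observe that a vertex $v \in V$ appears among $\pi_1, \ldots, \pi_M$ exactly when $v \in R$. So the assertion of the lemma is nothing but the claim that the indicator random variables $\bigl(\mathbbm{1}[v \in R]\bigr)_{v \in V}$ are independent, each Bernoulli with parameter $p$. By the characterisation of product measures on $2^{V}$, it suffices to prove
\[
\mathbb{P}[R = S] = p^{|S|}(1-p)^{n-|S|} \qquad \text{for every } S \subseteq V.
\]

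To establish this I would condition on $M$. Because the coin tosses are independent of $\pi$, and $\pi$ is uniform on $S_n$,
\[
\mathbb{P}[R = S] = \mathbb{P}[M = |S|]\cdot\mathbb{P}\bigl[\{\pi_1,\ldots,\pi_{|S|}\} = S\bigr].
\]
Here $\mathbb{P}[M = |S|] = {n \choose |S|} p^{|S|}(1-p)^{n-|S|}$ since $M$ is Binomial($n,p$), while $\mathbb{P}[\{\pi_1,\ldots,\pi_{|S|}\} = S] = |S|!\,(n-|S|)!/n! = {n \choose |S|}^{-1}$ by the symmetry of the uniform permutation (the unordered set of the first $j$ entries is a uniformly random $j$-subset of $V$). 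Multiplying, the binomial coefficients cancel and the displayed identity drops out; this is visibly the distribution of the random subset of $V$ obtained by keeping each vertex independently with probability $p$, so the events in the statement are independent with the stated probability.

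There is no genuinely hard step; the points that need care are merely the independence of $M$ and $\pi$ (which is exactly what lets the conditioning factorise) and the elementary count $\mathbb{P}[\{\pi_1,\ldots,\pi_j\} = S] = {n \choose j}^{-1}$ when $|S| = j$. One could instead bypass the subset computation by attaching to each vertex an independent Bernoulli($p$) mark and reading off $M$ as the number of marked vertices re-listed in the order $\pi$, but the subset identity above is the most direct route. This is the device used by Preater \cite{JP}; compare also \cite{MS}, Lemma 3.1.
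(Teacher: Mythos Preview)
Your argument is correct. The paper itself does not supply a proof of this lemma: it merely states the result, attributes it to Preater~\cite{JP} (with a pointer to \cite{MS}, Lemma~3.1), and closes with a box, so there is no in-paper proof to compare your approach against. Your computation of $\mathbb{P}[R=S]$ by conditioning on $M$ and cancelling the binomial coefficients is the standard short verification of this fact.
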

{\begin{flushright} \vspace*{-1mm} \mbox{$\Box$} \end{flushright}}

\begin{lemma}
\label{th_lower}
Let $P_n^k$ be the $k$th power of a directed path, $1 \leq k=k(n) < n-2$. Let $\pi$ be a random permutation of its vertices and $p = 1-(1-\varepsilon)n^{-1/(k+1)}$ for an $\varepsilon \in (0,1)$. There exists a constant $\tilde{c}>0$ such that
$$
\liminf_{n \rightarrow \infty} n^{1/(k+1)} \mathbb{P}[\pi_{\tau_p^*} = \mathds{1}] \geq \tilde{c}.
$$
\end{lemma}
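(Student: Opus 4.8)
The plan is to use the randomization of Lemma~\ref{l_indep} to pass to a continuous-time description of $\tau_p^*$ and then to bound its failure probability by a union bound. Concretely, let $A_1,\dots,A_n$ be independent and uniform on $[0,1]$, treat $A_i$ as the arrival time of $v_i$, and call $v_i$ \emph{rejected} if $A_i<p$. By Lemma~\ref{l_indep} this model carries the same law as the probability space defining $\tau_p^*$ (each vertex is rejected independently with probability $p$, the arrival order is uniform, and rejected vertices arrive before non-rejected ones). In this picture $\tau_p^*$ stops at the first non-rejected vertex, in order of arrival, that is maximal in the induced graph at the moment it arrives; since $\mathds{1}=v_1$ has no outgoing edge it is maximal in every induced graph containing it, so
\[
\{\pi_{\tau_p^*}=\mathds{1}\}\ \supseteq\ \{A_1>p\}\cap\bigcap_{j=2}^{n}\overline{S_j},\qquad S_j:=\{p<A_j<A_1\}\cap\bigl\{A_{j'}>A_j\text{ for all }j'\in N^{+}(v_j)\bigr\},
\]
where $N^{+}(v_j)=\{v_{j-1},\dots,v_{\max(1,j-k)}\}$ is the out-neighbourhood of $v_j$ in $P_n^k$: the event $S_j$ is precisely ``$v_j$ is a non-rejected vertex that arrives before $\mathds{1}$ and is maximal when it arrives'', and if no such $v_j$ occurs then $\tau_p^*$ reaches and stops at $\mathds{1}$.

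Since $S_j\subseteq\{A_1>p\}$, the union bound gives $\mathbb{P}[\pi_{\tau_p^*}=\mathds{1}]\ge(1-p)-\sum_{j=2}^{n}\mathbb{P}[S_j]$, and each $\mathbb{P}[S_j]$ is an elementary integral. For $2\le j\le k+1$ we have $v_1\in N^{+}(v_j)$ and $|N^{+}(v_j)|=j-1$, and since $\{A_j<A_1\}$ is then one of the neighbour conditions, conditioning on $A_j=x$ forces $x>p$ and $A_{j'}>x$ for $j'=1,\dots,j-1$, whence $\mathbb{P}[S_j]=\int_p^1(1-x)^{j-1}\,dx=(1-p)^{j}/j$. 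For $k+2\le j\le n$ we have $v_1\notin N^{+}(v_j)$, $|N^{+}(v_j)|=k$, and $\{A_j<A_1\}$ is an additional independent constraint, so $\mathbb{P}[S_j]=\int_p^1(1-x)^{k+1}\,dx=(1-p)^{k+2}/(k+2)$. Therefore
\[
\sum_{j=2}^{n}\mathbb{P}[S_j]=\sum_{j=2}^{k+1}\frac{(1-p)^{j}}{j}+(n-k-1)\frac{(1-p)^{k+2}}{k+2}\ \le\ \frac{(1-p)^{2}}{2p}+n\,\frac{(1-p)^{k+2}}{k+2},
\]
using $1/j\le 1/2$ and summing the geometric tail.

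Next I would substitute the given threshold $1-p=(1-\varepsilon)n^{-1/(k+1)}$. Then $n^{1/(k+1)}(1-p)=1-\varepsilon$; moreover $n^{1/(k+1)}\frac{(1-p)^2}{2p}=\frac{(1-\varepsilon)(1-p)}{2p}\le\frac{(1-\varepsilon)^2}{2\varepsilon}$ (because $p\ge\varepsilon$ and $1-p\le 1-\varepsilon$), and, since $(1-p)^{k+2}=(1-\varepsilon)^{k+2}n^{-(k+2)/(k+1)}$, we get $n^{1/(k+1)}\cdot n\cdot\frac{(1-p)^{k+2}}{k+2}=\frac{(1-\varepsilon)^{k+2}}{k+2}\le\frac{(1-\varepsilon)^{3}}{3}$. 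Multiplying the previous display by $n^{1/(k+1)}$ then yields, for every $n$,
\[
n^{1/(k+1)}\,\mathbb{P}[\pi_{\tau_p^*}=\mathds{1}]\ \ge\ (1-\varepsilon)\Bigl(1-\frac{1-\varepsilon}{2\varepsilon}-\frac{(1-\varepsilon)^{2}}{3}\Bigr)=:\tilde{c},
\]
and for $\varepsilon\in[1/2,1)$ --- for instance $\varepsilon=1/2$, which gives $\tilde{c}=5/24$ --- the right-hand side is a positive constant independent of $n$ and of $k=k(n)$, so $\liminf_{n\to\infty}n^{1/(k+1)}\mathbb{P}[\pi_{\tau_p^*}=\mathds{1}]\ge\tilde{c}$, as claimed.

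The point that needs care is the term $\sum_{j=2}^{k+1}(1-p)^{j}/j$, contributed by the ``boundary'' vertices $v_j$ whose out-neighbourhood contains $\mathds{1}$ and has size depending on $j$: one must check it stays of lower order after multiplication by $n^{1/(k+1)}$. The crude union bound above delivers a positive $\tilde{c}$ only when $\varepsilon$ is bounded away from $0$; taking $\varepsilon\ge 1/2$ is amply sufficient, and since the subsequent $\Theta$-estimate needs the lemma only for one admissible $\varepsilon$, nothing more is required. If one insisted on every $\varepsilon\in(0,1)$ --- in particular on sequences $k(n)\to\infty$ for which $n^{-1/(k+1)}\not\to 0$ --- one would instead sharpen $\sum_j\mathbb{P}[S_j]$ by exploiting that only the first ``spoiler'' in order of arrival can actually stop $\tau_p^*$, so that the events $S_j$ overlap heavily.
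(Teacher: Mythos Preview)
Your argument is correct and complete for $\varepsilon$ bounded away from $0$ (e.g.\ $\varepsilon\ge 1/2$), which is all that is needed downstream; but the route is genuinely different from the paper's. The paper does not track individual ``spoiler'' vertices at all. Instead it considers the structural event that among the rejected set $V_p^*$ there is no run of $k+1$ consecutive vertices $v_{i+1},\dots,v_{i+k+1}$ entirely outside $V_p^*$; together with $v_n\in V_p^*$ and $\mathds{1}\notin V_p^*$ this forces $\mathds{1}$ to be the unique vertex that is ever maximal after time $M$. A single application of Markov's inequality to the number of such runs gives
\[
\mathbb{P}[\pi_{\tau_p^*}=\mathds{1}]\ \ge\ \bigl(1-n(1-p)^{k+1}\bigr)\,p\,(1-p),
\]
and after substituting $1-p=(1-\varepsilon)n^{-1/(k+1)}$ the right-hand side is positive for \emph{every} $\varepsilon\in(0,1)$, uniformly in $k=k(n)$. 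Your approach trades this structural shortcut for a direct accounting of which $v_j$ could preempt $\mathds{1}$; that is more transparent about why $\tau_p^*$ fails, and it avoids the extra factor $p$ coming from the condition $v_n\in V_p^*$. The price is the boundary sum $\sum_{j=2}^{k+1}(1-p)^j/j$: bounding it by $(1-p)^2/(2p)$ is harmless when $k$ is fixed or grows slowly (since then $1-p\to 0$), but when $k(n)$ grows fast enough that $n^{-1/(k+1)}\not\to 0$ this term does not vanish, and your crude union bound yields a positive constant only for $\varepsilon$ large enough. You identify this correctly in your last paragraph. In short: the paper's proof is shorter and delivers the lemma for all $\varepsilon\in(0,1)$; yours is more explicit about the mechanism of failure but, as written, establishes the lemma only for some $\varepsilon$ --- which, as you note, is sufficient for Theorem~\ref{th_opti_prob}.
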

\begin{proof}
Let $V_p^*$ be the set $\{\pi_1, \pi_2, \ldots, \pi_M\}$ from Lemma \ref{l_indep}.
Let us define the following sequence of the indicator random variables
\[
X_i^{[M]} = \left\{  \begin{array}{ll}
1 & \textrm{if} \hspace{3 pt} \{v_{i+1}, v_{i+2}, \ldots, v_{i+k+1}\} \subseteq V_n \setminus V_p^*, \\
0 & \textrm{otherwise,}
 \end{array}\right.
\]
for $1 \leq i \leq n-k-2$. Let $X^{[M]} = \sum_{i=1}^{n-k-2}X_i^{[M]}$. Note that if $X^{[M]}=0$, $v_n \in V_p^*$ and $\mathds{1} \in V_n \setminus V_p^*$ then $\mathds{1}$ is the only element which comes as the maximal one in the induced graph after time $M$. Thus we obtain
\[
\mathbb{P}[\pi_{\tau_p^*} = \mathds{1}] \geq \mathbb{P} [X^{[M]}=0, v_n \in V_p^*, \mathds{1} \in V_n \setminus V_p^*].
\]
We have $\mathbb{P}[X_i^{[M]}=1]=(1-p)^{k+1}$, therefore $\mathbb{E}X^{[M]} = (n-k-2)(1-p)^{k+1} \leq n(1-p)^{k+1}$. By Markov's inequality $\mathbb{P}[X^{[M]} \geq 1] \leq \mathbb{E}X^{[M]}$, whence
\[
\mathbb{P}[X^{[M]}=0]  = 1 - \mathbb{P}[X^{[M]} \geq 1] \geq 1 - \mathbb{E}X^{[M]} \geq 1 - n(1-p)^{k+1}.
\]
Since $p = 1-(1-\varepsilon)n^{-1/(k+1)}$, by Lemma \ref{l_indep} we obtain
\[
\begin{split}
\mathbb{P}[&\pi_{\tau_p^*} =  \mathds{1}] \geq \mathbb{P}[X^{[M]}=0] \mathbb{P}[v_n \in V_p^*] \mathbb{P}[\mathds{1} \in V_n\setminus V_p^*] \geq \\
& (1-n(1-p)^{k+1}) p (1-p) = (1-(1-\varepsilon)^{k+1})(1-(1-\varepsilon)n^{-1/(k+1)})(1-\varepsilon)n^{-1/(k+1)},
\end{split}
\]
whence
\[
n^{1/(k+1)} \mathbb{P}[\pi_{\tau_p^*} = \mathds{1}] \geq (1-(1-\varepsilon)^{k+1})(1-(1-\varepsilon)n^{-1/(k+1)})(1-\varepsilon),
\]
thus for $1 \leq k < n-2$ we obtain
\[
\liminf_{n \rightarrow \infty} n^{1/(k+1)} \mathbb{P}[\pi_{\tau_p^*} = \mathds{1}] \geq \liminf_{n \rightarrow \infty} (1-(1-\varepsilon)^{k+1})(1-(1-\varepsilon)n^{-1/(k+1)})(1-\varepsilon) = \tilde{c}>0.
\]
\end{proof}

\begin{theorem}
\label{th_opti_prob}
Let $1 \leq k=k(n) < n$. There exists a constant $c>0$ such that for $P_n^k$
$$
\liminf_{n \rightarrow \infty} n^{1/(k+1)} \mathbb{P}[\pi_{\tau_n} = \mathds{1}] \geq c.
$$
\end{theorem}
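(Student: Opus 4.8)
The plan is to obtain Theorem \ref{th_opti_prob} as an immediate consequence of Lemma \ref{th_lower}, the optimality of $\tau_n$ (Theorem \ref{th_optimal}), and the elementary evaluation $\mathbb{P}[\pi_{\tau_n}=\mathds{1}]=\tfrac12$ available in the two boundary cases $k=n-2$ and $k=n-1$. The only genuine issue is that the asserted constant must be uniform in $k$, since $k=k(n)$ is allowed to grow with $n$; this forces me to re-extract a pointwise bound from the proof of Lemma \ref{th_lower} rather than simply quote its $\liminf$ conclusion.

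First I would fix $\varepsilon\in(0,1)$ (e.g.\ $\varepsilon=\tfrac12$) and split according to the value of $k=k(n)$. If $1\le k<n-2$, then the randomized rule $\tau_p^*$ with $p=1-(1-\varepsilon)n^{-1/(k+1)}$ is a (randomized) stopping time using no information about $d_P$; since a coin-flip randomized rule is a mixture of deterministic stopping times it cannot outperform the optimal one, so $\mathbb{P}[\pi_{\tau_n}=\mathds{1}]\ge\mathbb{P}[\pi_{\tau_p^*}=\mathds{1}]$. The computation carried out in the proof of Lemma \ref{th_lower} establishes, for every such $n$ and $k$, the pointwise inequality
\[
n^{1/(k+1)}\,\mathbb{P}[\pi_{\tau_p^*}=\mathds{1}]\ \ge\ \bigl(1-(1-\varepsilon)^{k+1}\bigr)\bigl(1-(1-\varepsilon)n^{-1/(k+1)}\bigr)(1-\varepsilon).
\]
Using $k\ge1$ to get $(1-\varepsilon)^{k+1}\le(1-\varepsilon)^2$, and $n^{-1/(k+1)}\le1$ to get $1-(1-\varepsilon)n^{-1/(k+1)}\ge\varepsilon$, the right-hand side is bounded below by the $n$- and $k$-free constant $c_1:=\bigl(1-(1-\varepsilon)^2\bigr)\varepsilon(1-\varepsilon)>0$. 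If instead $k\in\{n-2,n-1\}$, then, as observed earlier in this section, the defining condition of $\tau_n$ is also sufficient for success and $\mathbb{P}[\pi_{\tau_n}=\mathds{1}]=\tfrac12$, while $n^{1/(k+1)}\ge1$; hence $n^{1/(k+1)}\mathbb{P}[\pi_{\tau_n}=\mathds{1}]\ge\tfrac12$ in this case.

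Setting $c:=\min\{c_1,\tfrac12\}>0$, the two cases together give $n^{1/(k+1)}\mathbb{P}[\pi_{\tau_n}=\mathds{1}]\ge c$ for every $n$ with $k=k(n)<n$, and taking $\liminf$ over $n$ finishes the proof. I do not expect any real obstacle here; the work is entirely bookkeeping --- the case split $k<n-2$ versus $k\in\{n-2,n-1\}$, the domination of $\tau_p^*$ by $\tau_n$, and the crude $k$-independent lower bounds for $(1-\varepsilon)^{k+1}$ and $n^{-1/(k+1)}$ that make the constant uniform.
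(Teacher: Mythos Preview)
Your proof is correct and follows essentially the same route as the paper: split off the boundary cases $k\in\{n-2,n-1\}$ where $\mathbb{P}[\pi_{\tau_n}=\mathds{1}]=\tfrac12$, and for $1\le k<n-2$ invoke the optimality of $\tau_n$ together with the lower bound from Lemma~\ref{th_lower}, then take $c=\min\{\tilde c,\tfrac12\}$. The one place where you go slightly beyond the paper is in explicitly squeezing out a $k$-independent constant $c_1=\bigl(1-(1-\varepsilon)^2\bigr)\varepsilon(1-\varepsilon)$ from the pointwise inequality, whereas the paper simply records the $\liminf$ from Lemma~\ref{th_lower} as a positive number $\tilde c$; your extra line makes the uniformity in the sequence $k(n)$ transparent, but it is not a different argument.
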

\begin{proof}
We have already discussed the cases $k=n-2$ and $k=n-1$. Then $\mathbb{P}[\pi_{\tau_n}=\mathds{1}] = 1/2$ and $\lim_{n \rightarrow \infty} n^{1/(k+1)} \mathbb{P}[\pi_{\tau_n} = \mathds{1}] = 1/2$. For the constant $\tilde{c}$ from Lemma \ref{th_lower} and for $1 \leq k <n-2$ by the optimality of $\tau_n$ we have
\[
\liminf_{n \rightarrow \infty} n^{1/(k+1)} \mathbb{P}[\pi_{\tau_n} = \mathds{1}] \geq \liminf_{n \rightarrow \infty} n^{1/(k+1)} \mathbb{P}[\pi_{\tau_p^*} = \mathds{1}] \geq \tilde{c}.
\]
Then setting $c=\min\{\tilde{c}, 1/2\}$ we obtain for $1 \leq k <n$ 
\[
\liminf_{n \rightarrow \infty} n^{1/(k+1)} \mathbb{P}[\pi_{\tau_n} = \mathds{1}] \geq c.
\]
\end{proof}
\begin{theorem}
\label{th_upper}
Let $1 \leq k=k(n) < n$. There exists a constant $C>0$ such that for $P_n^k$
$$
\limsup_{n \rightarrow \infty} n^{1/(k+1)} \mathbb{P}[\pi_{\tau_n} = \mathds{1}] \leq C.
$$
\end{theorem}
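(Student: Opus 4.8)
The plan is to bound the integrand in Lemma \ref{l_integral} from above and then integrate. Since $X^{(p)}$ depends only on $A_2,\dots,A_n$ it is independent of $A_1$, and dropping the event $\{A_n<p\}$ only increases probability, so it suffices to estimate $\mathbb{P}[X^{(p)}=0]$ with $p$ treated as a fixed parameter. First I would control this by a block argument. Call an index $j\in\{2,\dots,n-1\}$ \emph{early} if $A_j<p$; each index is early with probability $p$, independently of the others. If $X^{(p)}=0$, then no $k+1$ consecutive indices of $\{2,\dots,n-1\}$ are all late, so partitioning $\{2,\dots,n-1\}$ into $N:=\lfloor (n-2)/(k+1)\rfloor$ disjoint blocks of $k+1$ consecutive indices, each block must contain at least one early index. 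These $N$ events are independent (the blocks are disjoint) and each has probability $1-(1-p)^{k+1}$, so
\[
\mathbb{P}[X^{(p)}=0]\le\bigl(1-(1-p)^{k+1}\bigr)^{N}.
\]

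Next I would carry out the integration. With the substitution $q=1-p$ and the inequality $1-q^{k+1}\le e^{-q^{k+1}}$, Lemma \ref{l_integral} gives
\[
\mathbb{P}[\pi_{\tau_n}=\mathds{1}]\le\int_0^1\bigl(1-q^{k+1}\bigr)^{N}\,dq\le\int_0^1 e^{-Nq^{k+1}}\,dq .
\]
The change of variables $u=Nq^{k+1}$ turns the last integral into $\dfrac{N^{-1/(k+1)}}{k+1}\displaystyle\int_0^{N}u^{1/(k+1)-1}e^{-u}\,du$, which is at most $\dfrac{1}{k+1}\,\Gamma\!\bigl(\tfrac1{k+1}\bigr)\,N^{-1/(k+1)}$; since $\Gamma(x)=\Gamma(x+1)/x\le 1/x$ on $(0,1]$ (because $\Gamma\le 1$ on $[1,2]$), this yields the clean bound $\mathbb{P}[\pi_{\tau_n}=\mathds{1}]\le N^{-1/(k+1)}$.

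It then remains to show that $(n/N)^{1/(k+1)}$ stays bounded by an absolute constant, uniformly over all admissible $k=k(n)$ --- and this is the step I expect to be the main obstacle, since $k$ may grow with $n$ and the regimes ``$k$ bounded'', ``$k$ of order $\log n$'' and ``$k$ linear in $n$'' behave differently. The cases $k\in\{n-2,n-1\}$ are already settled ($\mathbb{P}[\pi_{\tau_n}=\mathds{1}]=1/2$), so assume $k<n-2$; then $N\ge 1$ and $N\ge (n-k-3)/(k+1)$. If $k+1\ge n/2$, then $(n/N)^{1/(k+1)}\le n^{1/(k+1)}\le n^{2/n}\to 1$. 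If $k+1<n/2$, then $n-k-3\ge n/4$ for $n$ large, so $n/N\le 4(k+1)$ and $(n/N)^{1/(k+1)}=\exp\!\bigl((\ln 4+\ln(k+1))/(k+1)\bigr)\le e^{\ln 4}=4$, because $t\mapsto(\ln 4+\ln t)/t$ is decreasing on $[1,\infty)$. Either way $\limsup_{n\to\infty}n^{1/(k+1)}\mathbb{P}[\pi_{\tau_n}=\mathds{1}]\le 4$, so $C=4$ works; the probabilistic core (reducing $X^{(p)}=0$ to independent block events and evaluating the resulting integral via the gamma function) is routine and mirrors the $k=1$ treatment.
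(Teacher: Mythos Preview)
Your argument is correct and follows essentially the same route as the paper: both select $N=\lfloor(n-2)/(k+1)\rfloor$ disjoint windows of $k{+}1$ consecutive indices to obtain the independent-block bound $\mathbb{P}[X^{(p)}=0]\le(1-(1-p)^{k+1})^{N}$, integrate, and finish by showing $(n/N)^{1/(k+1)}$ is uniformly bounded. The only difference is in bookkeeping for the integral: the paper evaluates $\int_0^1(1-(1-p)^{k+1})^{N}\,dp$ exactly as a Beta function and then estimates the resulting Gamma ratio, obtaining $C=\Gamma(4/3)\,3^{1/3}\approx1.29$, whereas you pass through $1-q^{k+1}\le e^{-q^{k+1}}$ and the crude bound $\Gamma(x)\le 1/x$ on $(0,1]$, which is more elementary but yields the larger constant $C=4$.
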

\begin{proof}
Recall again that we have already discussed the two easy cases for $k=n-2$ and $k=n-1$ where $\mathbb{P}[\pi_{\tau_n}=\mathds{1}] = 1/2$, whence, obviously,  $\lim_{n \rightarrow \infty} n^{1/(k+1)} \mathbb{P}[\pi_{\tau_n} = \mathds{1}] = 1/2$. Further let $1 \leq k=k(n) <n-2$.

Since the events $[X^{(p)}=0]$ and $[A_1=p]$ are independent and also $[X^{(p)}=0]$ and $[A_1=p, A_n<p]$ are independent, $\mathbb{P}[A_n<p|A_1=p]=p$ and $\mathbb{P}[X^{(p)}=0|A_1=p] = 1-\mathbb{P}[X^{(p)}\geq 1|A_1=p] $, by Lemma \ref{l_integral} we get
\begin{equation}
\label{eq_success}
\begin{split}
\mathbb{P}[\pi_{\tau_n} = \mathds{1}] =& \int\limits_{0}^{1} {\mathbb{P}[A_n<p, X^{(p)}=0|A_1=p] \,dp} =
 \int\limits_{0}^{1} {p(1-\mathbb{P}[X^{(p)}\geq 1|A_1=p]) \,dp} = \\
& \int\limits_{0}^{1} {p \,dp} - \int\limits_{0}^{1} {p\mathbb{P}[X^{(p)}\geq 1|A_1=p] \,dp} =
  1/2 - \int\limits_{0}^{1} {p\mathbb{P}[X^{(p)}\geq 1|A_1=p] \,dp}.
\end{split}
\end{equation}
Now we are going to bound $\mathbb{P}[X^{(p)} \geq 1|A_1=p]$ from below. Let $m = \lfloor \frac{n-2}{k+1}\rfloor$. Since
\[
X_1^{(p)}, X_{(k+1)+1}^{(p)}, X_{2(k+1)+1}^{(p)}, \ldots, X_{(m-1)(k+1)+1}^{(p)}
\]
are independent and $\mathbb{P}[X_i=1|A_1=p]=(1-p)^{k+1}$ for $i=1,2,\ldots,n-k-2$, we have
\begin{equation}
\label{eq_X1}
\begin{split}
\mathbb{P}[X^{(p)} \geq 1|&A_1=p] \geq \\
& \mathbb{P}[X_1^{(p)}=1 \vee X_{(k+1)+1}^{(p)}=1 \vee \ldots \vee X_{(m-1)(k+1)+1}^{(p)}=1|A_1=p] = \\
& 1- \mathbb{P}[X_1^{(p)}=0 \wedge X_{(k+1)+1}^{(p)}=0 \wedge \ldots \wedge X_{(m-1)(k+1)+1}^{(p)}=0|A_1=p] = \\
& 1-(1-(1-p)^{k+1})^{m}.
\end{split}
\end{equation}
Thus from \ref{eq_success} and \ref{eq_X1} we obtain
\[
\begin{split}
\mathbb{P}[\pi_{\tau_n} = \mathds{1}] \leq & 1/2 - \int\limits_{0}^{1} {p(1-(1-(1-p)^{k+1})^{m}) \,dp} = \\
& 1/2 - \int\limits_{0}^{1} {p \,dp} + \int\limits_{0}^{1} {p(1-(1-p)^{k+1})^{m} \,dp} \leq \int\limits_{0}^{1} {(1-(1-p)^{k+1})^{m} \,dp}.
\end{split}
\]
Let us substitute $x = (1-p)^{k+1}$ in the above integral. We get $(1-p)^k = x^{\frac{k}{k+1}}$ and $\,dp = - \frac{1}{k+1} x^{-\frac{k}{k+1}} \, dx$. Therefore
\[
\int\limits_{0}^{1} {(1-(1-p)^{k+1})^{m} \,dp} =  \int\limits_{0}^{1} {\frac{1}{k+1} x^{-\frac{k}{k+1}}(1-x)^m \,dx}
\]
and integrating by parts we get
\[
\begin{split}
\int\limits_{0}^{1} {\frac{1}{k+1} x^{-\frac{k}{k+1}}(1-x)^m\,dx} =& \left[ x^{\frac{1}{k+1}} (1-x)^m \right]^1_0 - \int\limits_{0}^{1} {-x^{\frac{1}{k+1}}m(1-x)^{m-1} \,dx} = \\
& m \int\limits_{0}^{1} {x^{\frac{1}{k+1}}(1-x)^{m-1} \,dx} = m B\left(1+\frac{1}{k+1}, m\right).
\end{split}
\]
Since $m \Gamma(m) = \Gamma(m+1)$ and for every real $a>0$, $b>0$ we have $B(a,b)=\frac{\Gamma(a)\Gamma(b)}{\Gamma(a+b)}$, we obtain
\[
\mathbb{P}[\pi_{\tau_n} = \mathds{1}] \leq m B\left(1+\frac{1}{k+1}, m\right)= \frac{m \Gamma(m) \Gamma(1+\frac{1}{k+1})}{\Gamma(m+1+\frac{1}{k+1})} = \frac{\Gamma(m+1) \Gamma(1+\frac{1}{k+1})}{\Gamma(m+1+\frac{1}{k+1})}.
\]
Thus we have
\[
\limsup_{n \rightarrow \infty} n^{1/(k+1)} \mathbb{P}[\pi_{\tau_n}= \mathds{1}] \leq \limsup_{n \rightarrow \infty} n^{1/(k+1)} \frac{\Gamma(m+1) \Gamma(1+\frac{1}{k+1})}{\Gamma(m+1+\frac{1}{k+1})}
\]
where $m = \lfloor \frac{n-2}{k+1}\rfloor$. We need to show that there exists a constant $C>0$ such that
\begin{equation}
\label{ineq_gamma}
\limsup\limits_{n \rightarrow \infty} n^{1/(k+1)} \frac{\Gamma(m+1)}{\Gamma(m+1+\frac{1}{k+1})} \leq C.
\end{equation}
Since $\frac{\Gamma(x + \alpha)}{\Gamma(x)}$ is increasing in $x$ we can write
\[
\frac{\Gamma(m+1)}{\Gamma(m)} = \frac{\Gamma(m+1)}{\Gamma(m+1-\frac{1}{k+1})} \frac{\Gamma(m+1-\frac{1}{k+1})}{\Gamma(m+1-\frac{2}{k+1})} \ldots \frac{\Gamma(m+1-\frac{k}{k+1})}{\Gamma(m)} \leq \left(\frac{\Gamma(m+1+\frac{1}{k+1})}{\Gamma(m+1)}\right)^{k+1}.
\]
Thus using $\Gamma(m+1)/\Gamma(m)=m$ we obtain
\[
m^{\frac{1}{k+1}} = \left\lfloor \frac{n-2}{k+1}\right\rfloor^{\frac{1}{k+1}} \leq \frac{\Gamma(m+1+\frac{1}{k+1})}{\Gamma(m+1)}.
\]
Thus (\ref{ineq_gamma}) holds when $k$ is a constant and when $k=k(n) \xrightarrow{n \rightarrow \infty} \infty$ and by a standard contradiction type argument it also holds in general.

Note that for $k<n-2$ we actually obtain
\[
\limsup_{n \rightarrow \infty} n^{1/(k+1)} \mathbb{P}[\pi_{\tau_n}= \mathds{1}] \leq \limsup_{n \rightarrow \infty} \Gamma(1+\frac{1}{k+1}) (k+1)^{\frac{1}{k+1}}.
\]
The function $f(k) = \Gamma(1+\frac{1}{k+1}) (k+1)^{1/(k+1)}$ ($k \in \mathbb{N}$) attains its maximum at $k=2$ thus we can set the constant $C = \Gamma(4/3)3^{1/3} \approx 1.29$.
\end{proof}

Although we do not know the optimal algorithm when the selector does not know the values of $d_P$ in the induced graph, we know the order of the probability of its success.
\begin{corollary}
For $P_n^k$ being the $k$th power of a directed path let $\tilde{\tau}_n$ be the optimal stopping time for choosing the sink when the selector does not know the values $d_P$ of each edge that appears in the induced graph. Then
$$
\mathbb{P}[\pi_{\tilde{\tau}_n} = \mathds{1}] = \Theta(n^{-1/(k+1)}).
$$
\end{corollary}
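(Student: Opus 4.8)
The plan is to squeeze $\mathbb{P}[\pi_{\tilde{\tau}_n}=\mathds{1}]$ between the two bounds already proved for the better-informed selector, using one soft principle: the optimal probability of success cannot increase when the selector is given \emph{less} information. To make this precise I would let $\mathcal{G}_t\subseteq\mathcal{F}_t$ be the $\sigma$-algebra generated only by the isomorphism type of the induced graph $P_{(t)}$ \emph{without} the values $d_P$ on its edges; the restricted game is the optimal stopping problem for $(\mathcal{G}_t)$, and since $S_n$ is finite an optimal $\tilde{\tau}_n$ exists. The two facts I need are: every $(\mathcal{G}_t)$-stopping time is in particular an $(\mathcal{F}_t)$-stopping time, hence a legitimate strategy in the model of Theorem \ref{th_optimal}; and, conversely, the rule $\tau_p^*$ defined just before Lemma \ref{th_lower} reads only the coin flips and the sink-set of the \emph{observed} induced graph, so it is a legitimate strategy in the restricted game (a randomized one, whose value is in any case an average of deterministic ones reading the same information).

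Given this, the upper bound is immediate: $\tilde{\tau}_n$ is a stopping time in the model where $\tau_n$ is optimal, so $\mathbb{P}[\pi_{\tilde{\tau}_n}=\mathds{1}]\le\mathbb{P}[\pi_{\tau_n}=\mathds{1}]$, and Theorem \ref{th_upper} gives $\limsup_n n^{1/(k+1)}\mathbb{P}[\pi_{\tilde{\tau}_n}=\mathds{1}]\le C$. For the lower bound when $1\le k<n-2$, optimality of $\tilde{\tau}_n$ among restricted strategies gives $\mathbb{P}[\pi_{\tilde{\tau}_n}=\mathds{1}]\ge\mathbb{P}[\pi_{\tau_p^*}=\mathds{1}]$ for every $p$; taking $p=1-(1-\varepsilon)n^{-1/(k+1)}$ and invoking Lemma \ref{th_lower} yields $\liminf_n n^{1/(k+1)}\mathbb{P}[\pi_{\tilde{\tau}_n}=\mathds{1}]\ge\tilde{c}>0$. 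Hence $\mathbb{P}[\pi_{\tilde{\tau}_n}=\mathds{1}]=\Theta(n^{-1/(k+1)})$ for $1\le k<n-2$.

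It remains to deal with $k\in\{n-2,n-1\}$, where $n^{-1/(k+1)}\to 1$, so a positive-constant lower bound suffices while the upper bound $\mathbb{P}[\pi_{\tilde{\tau}_n}=\mathds{1}]\le\mathbb{P}[\pi_{\tau_n}=\mathds{1}]=\tfrac12$ is already in hand. For the lower bound here I would use the distance-free half-rejection rule (reject the first $\lfloor n/2\rfloor$ vertices, then take the first sink of the induced graph): for $n$ large this is exactly the classical secretary problem with the vertex index as the rank, because in these two powers a vertex is a sink of the induced graph precisely when its index is smallest among the arrived vertices (for $k=n-2$ the sole exception, the two-vertex graph $\{\mathds{1},v_n\}$, cannot matter once the threshold exceeds $2$), so it wins with probability $\to\tfrac12\ln 2>0$. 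Combining the cases proves the corollary. I do not expect a real obstacle: the statement is a corollary of the information-monotonicity principle plus Theorem \ref{th_upper} and Lemma \ref{th_lower}. The only points needing genuine (but routine) verification are that $\tau_p^*$ and the half-rejection rule truly ignore the values $d_P$ --- they only inspect which arrived vertices are sinks of the visible induced graph --- and the two boundary powers $k\in\{n-2,n-1\}$, where Lemma \ref{th_lower} does not directly apply but the target order is only constant.
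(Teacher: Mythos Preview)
Your proposal is correct and follows essentially the same approach as the paper: the upper bound via information monotonicity ($\tilde{\tau}_n$ is admissible in the richer filtration, so $\mathbb{P}[\pi_{\tilde{\tau}_n}=\mathds{1}]\le\mathbb{P}[\pi_{\tau_n}=\mathds{1}]$ and Theorem~\ref{th_upper} applies), and the lower bound by observing that the strategy $\tau_p^*$ used in Lemma~\ref{th_lower} reads only the sink-set of the induced graph and never the values $d_P$, hence is available to the restricted selector. Your treatment is in fact more careful than the paper's three-line argument: you make the filtration comparison explicit, you address the randomization in $\tau_p^*$, and you separately dispatch the boundary cases $k\in\{n-2,n-1\}$ (where Lemma~\ref{th_lower} does not apply and the paper tacitly relies on the $1/2$ computed for $\tau_n$) by invoking a distance-free threshold rule that reduces to the classical secretary problem.
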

\begin{proof}
We have $\mathbb{P}[\pi_{\tilde{\tau}_n} = \mathds{1}] \leq \mathbb{P}[\pi_{\tau_n} = \mathds{1}]$ because when the values of $d_P$ are known in the induced graph one can take at least as efficient decision as when they are not known. On the other hand note that our lower estimation of $\liminf_{n \rightarrow \infty} n^{1/(k+1)} \mathbb{P}[\pi_{\tau_n} = \mathds{1}]$ in Theorem \ref{th_opti_prob} does not use the information about the values $d_P$ at all. Thus the estimation is also true for $\tilde{\tau}_n$.
\end{proof}
\begin{remark}
{\rm Recall that for $k=1$ our problem is the directed path case from \cite{GKMM}. For the other extreme case $k=n-1$ it is the classical linear order secretary problem with extra information $d_{P^{n-1}_{n}}(e)$. From \cite{DVL} we know that the probability of success of the optimal algorithm for the linear order is asymptotically $1/e$. It is quite surprising that revealing this additional information about distances increases the probability of success of the optimal algorithm only to $1/2$.}
\end{remark}

\end{document}